\newtheorem{theorem}{Theorem}[section]
\newtheorem{corollary}{Corollary}[section]
\def\proclaim#1{\par \bigskip\noindent {\bf #1}\bgroup\it\ }
\def\endproclaim{\egroup\par\bigskip}
\newbox\TempBox \newbox\TempBoxA
\def\ci{\perp\!\!\!\perp}
\def\text#1{\mbox{\rm #1}}
\def\overset#1#2{\stackrel{#1}{#2} }
\def\underwiggle 1{
\ifmmode\setbox\TempBox=\hbox{$ 1$}\else\setbox\TempBox=\hbox{ 1}\fi
\setbox\TempBoxA=\hbox to \wd\TempBox{\hss\char'176\hss}
\rlap{\copy\TempBox}\smash{\lower9pt\hbox{\copy\TempBoxA}} }
\newtheorem{lem}{Lemma}
\theoremstyle{remark}
\begin{document}

\title{\bf  On the strong approximations of partial sums of $ f(n_kx)$}
\author{Marko Raseta$^{\rm 1}$ }
\date{}
 \maketitle

\begin{abstract}
We prove a strong invariance principle for the sums $\sum_{k=1}^N f(n_kx)$,
where $f$ is a smooth periodic function on $\mathbb R$ and $(n_k)_{k\ge 1}$ is an increasing
random sequence. Our results show that in contrast to the classical Salem-Zygmund theory,
the asymptotic properties of lacunary series with random gaps can be described very precisely
without any assumption on the size of the gaps.
\end{abstract}

\renewcommand{\thefootnote}{}

\footnote{ {\it 2010 Mathematics Subject Classification.}  Primary 60F17, 42A55, 42A61.}

\footnote{ {\it Keywords.} Law of the iterated logarithm, lacunary series, random indices.}

\footnote{$^{\rm 1)}$Department of Mathematics, University of York Email: {\tt
marko.raseta@york.ac.uk}}

\section{Introduction}

Let $f: \mathbb{R}\to \mathbb{R}$ be a measurable function satisfying
\begin{equation}\label{fcond}
f(x+1)=f(x), \quad \int_0^1 f(x) dx=0, \quad \|f\|^2= \int_0^1 f^2(x) dx <\infty.
\end{equation}
It is well known that for rapidly increasing $(n_k)_{k\ge 1}$ the sequence
$(f(n_kx))_{k\ge 1}$ behaves like a sequence of independent random variables.
For example, if
\begin{equation}\label{infgap}
n_{k+1}/n_k\to\infty
\end{equation}
and $f$ is a Lipschitz function, then
\begin{equation}\label{clt}
N^{-1/2} \sum_{k=1}^N f(n_kx) \overset{d}{\longrightarrow} N(0, \|f\|^2)
\end{equation}
and
\begin{equation}\label{lil}
\limsup_{N\to\infty} (2N\log\log N)^{-1/2} \sum_{k=1}^N f(n_kx) =\|f\| \qquad \
\text{a.s.}
\end{equation}
with respect to the probability space $(0, 1)$ equipped with Borel sets and Lebesgue measure
(see Takahashi \cite{tak1}, \cite{tak2}). Assuming only the Hadamard gap condition
\begin{equation}\label{had}
n_{k+1}/n_k \ge q>1, \qquad k=1, 2, \ldots
\end{equation}
the situation becomes more complicated. Kac \cite{ka} proved that $f(n_kx)$ satisfies the CLT for $n_k=2^k$ and
Erd\H{o}s and Fortet (see also \cite{ka}) showed that this generally fails for $n_k=2^k-1$. Gaposhkin \cite{gap} showed that
$f(n_kx)$ satisfies the CLT provided the ratios $n_{k+1}/n_k$ are integers or $n_{k+1}/n_k\to \alpha>1$ where
$\alpha^r$ is irrational for $r=1, 2, \ldots$. A necessary and sufficient number-theoretic condition for the CLT
for $f(n_kx)$ under  (\ref{had}) was given by Aistleitner and Berkes \cite{aibe}.

For sequences $(n_k)_{k\ge 1}$ growing slower than exponentially, the asymptotic behavior of $S_N= \sum_{k=1}^N f(n_kx)$
still depends on arithmetic properties of $n_k$, but the arising number theoretical problems become essentially
intractable. As a consequence, the asymptotic distribution (if it exists) of normed sums of $f(n_kx)$ is not known even
for $f(x)=\sin x$ and
simple sequences like $n_k=k^r$ $(r=2, 3, \ldots)$. On the other hand, as it happens frequently in analysis, the "typical"
behavior of the sum $S_N$ is much more manageable than individual cases. For example, it is an open question if $\sin n_kx$
satisfies the central limit theorem for $n_k=e^{k^\alpha}$,  and all $\alpha>0$  (for more information, see Erd\H{o}s
\cite{er62}); on the other hand, Kaufman \cite{kau} showed that the CLT holds for $n_k=e^{ck^\alpha}$ for all $\alpha>0$
and almost all $c>0$ in the sense of Lebesgue measure.
Random constructions have also been used to solve many problems in harmonic analysis, see e.g.\ Salem and Zygmund
\cite{sz1954}, Erd\H{o}s \cite{er54}, Halberstam and Roth \cite{hr}, Berkes \cite{be78}, \cite{be79}, Bobkov and G\"otze 
\cite{bogo},
Fukuyama \cite{fu1},
\cite{fu2}, \cite{fu3}, Aistleitner and Fukuyama \cite{af}. The purpose of this paper
is to prove a strong invariance principle for $\sum_{k=1}^N f(n_kx)$ in the case when $(n_k)_{k\ge 1}$ is an increasing
random walk, i.e.\ $n_{k+1}-n_k$ are i.i.d.\ positive random variables. This model was introduced by Schatte \cite{sch2}
and it has remarkable properties, see Schatte \cite{sch2}, \cite{sch3}, Weber \cite{we}, Berkes and Weber \cite{bewe}.
In this paper we will prove the following result.

\bigskip\noindent
{\bf Theorem.} {\it Let $(X_n)_{n \in \mathbb N}$ be a sequence of i.i.d.\ positive
random variables defined on a sufficiently large probability space $(\Omega, \mathcal{F}, \mathbb{P})$
and let $S_n=\sum_{k=1}^n X_k$. Assume $X_1$ is bounded with bounded density.
Let $f$ be a \rm{Lip} ($\alpha$) function  satisfying (\ref{fcond}) and put
\begin{align}\label{axdef}
A_x = \|f\|^2 + 2 \sum\limits^\infty_{k =1} \mathbb E f(U)
f(U + S_k x),
\end{align}
where $U$ is a uniform $(0, 1)$ random variable, independent of $(X_j)_{j \ge 1}$. Then there exists a
Brownian motion such that
\[
\sum\limits_{k = 1}^n f(S_k x) = W(A_x n) + O\left(n^{5/12 + \varepsilon} \log n\right) \qquad \text{a.s.}
\]
for all $\varepsilon > 0$.}

\bigskip
Clearly, the existence of such an $U$ can always be guaranteed by an enlargement of the probability space. The absolute
convergence of the series in (\ref{axdef}) will follow from the proof of our
theorem.

Changing the notation slightly, our theorem and Fubini's theorem imply immediately the LIL
\begin{equation}\label{lil2}
\limsup_{N\to\infty}\, (2N\log\log N)^{-1/2} \sum_{k=1}^N f(n_kx) =A_x^{1/2} \qquad \text{a.s.}
\end{equation}
and its functional version for almost all sequences $(n_k)$ generated by the random walk model.
Note that, in contrast to the nonrandom case (\ref{lil}), the limsup in (\ref{lil2}) is a function of $x$.
Other immediate consequences are Chung's lower LIL
\begin{equation}\label{lil3}
\liminf_{N\to\infty} \, \left(\frac{\log\log N}{N}\right)^{1/2}  \max_{1\le M\le N} \left| \sum_{k=1}^M f(n_kx)\right|
=\frac{8}{\pi^2} A_x^{1/2} \qquad \text{a.s.}
\end{equation}
and the Kolmogorov-Erd\H{o}s-Feller-Petrovski test stating that for any positive nondecreasing function $\varphi$ on $(0, 
\infty)$
and almost all $x$ the inequality
$$ \sum_{k=1}^M f(n_kx) > \sqrt{N} \varphi(N)$$
holds for finitely or infinitely many $N$ according as the integral
$$\int_1^\infty \frac{\varphi(t)}{t} e^{-\varphi(t)^2/2} dt$$
converges or diverges.

\bigskip

\section{\bf Proof of the theorem}
We start with some preparatory results. All sums here are considered modulo 1.
\begin{lem}[Schatte, \cite{sch2}]
Let the three random vectors $X=(X_1, \dots, X_r)$, $U$, and $(W_1,\dots ,W_s)$
be independent and let $W=f(X)$ with a measurable function $f$. If $U$ is uniformly
distributed, then the two random vectors $X$ and $(W+ U + W_1, W+ U
+ W_2, \dots , W+ U + W_s)$ are independent, too.
\end{lem}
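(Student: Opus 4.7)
The plan is to exploit translation-invariance of Lebesgue measure on the circle $\mathbb{R}/\mathbb{Z}$: for any random variable $Y$ independent of a uniform variable $U$ on $(0,1)$, the shift $Y + U \pmod 1$ is again uniform on $(0,1)$ and independent of $Y$. Setting $V := W + U \pmod 1 = f(X) + U \pmod 1$, the conclusion will reduce to showing that $X$ is independent of the pair $(V, W_1, \ldots, W_s)$, since each coordinate $W + U + W_j$ equals $V + W_j$ and is therefore a measurable function of that pair.

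First I would establish that $V$ is uniform on $(0,1)$ and independent of $X$. Conditioning on $X = x$, the hypothesis $X \perp U$ implies that $U$ is still uniform given $X = x$, so $V = f(x) + U \pmod 1$ is uniform on $(0,1)$; because this conditional law does not depend on $x$, we conclude $V \perp X$ with $V$ uniform.

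Next I would promote this to joint independence of $X$ from $(V, W_1, \ldots, W_s)$. Since $V$ is a measurable function of $(X, U)$, and since $(X, U)$ and $(W_1, \ldots, W_s)$ are independent by hypothesis, the pair $(X, V)$ is jointly independent of $(W_1, \ldots, W_s)$. Combining this with the previously established $V \perp X$ yields, for arbitrary measurable sets $C, A, B_1, \ldots, B_s$,
\[
\mathbb{P}(X \in C,\, V \in A,\, W_j \in B_j \ \forall j) = \mathbb{P}(X \in C)\,\mathbb{P}(V \in A)\,\mathbb{P}(W_j \in B_j \ \forall j),
\]
so $X$ is independent of $(V, W_1, \ldots, W_s)$. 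Applying the measurable map $(v, w_1, \ldots, w_s) \mapsto (v + w_1, \ldots, v + w_s)$ to the second block then gives the desired independence of $X$ from $(W + U + W_1, \ldots, W + U + W_s)$.

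The proof is essentially mechanical; the only point that deserves care is to avoid conflating pairwise with joint independence. For that reason I would explicitly verify both $V \perp X$ and $(X, V) \perp (W_1, \ldots, W_s)$ before assembling the final product identity, rather than invoke a single omnibus independence statement.
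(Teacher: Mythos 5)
Your argument is correct. The key step --- that $V = W+U \pmod 1$ is uniform and independent of the whole vector $X$ (not merely of $W=f(X)$), by translation invariance of Lebesgue measure on the circle applied conditionally on $X=x$ --- is exactly the right ``one-time pad'' mechanism, and your care in separating the two independence statements ($V \ci X$ and $(X,V)\ci (W_1,\dots,W_s)$) before multiplying them together, then pushing forward by the measurable map $(v,w_1,\dots,w_s)\mapsto(v+w_1,\dots,v+w_s)$, closes the argument cleanly. Note that the paper itself gives no proof here: the lemma is quoted verbatim from Schatte, so there is nothing to compare against except the original source, whose argument is the same translation-invariance device; your write-up is a sound, self-contained reconstruction.
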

\begin{lem}[Schatte, \cite{sch2}]
Let $W$ and $U$ be independent random variables, where $U$ is uniformly
distributed. Then $W+ U$ is independent of $W$.
\end{lem}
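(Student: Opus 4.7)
The plan is a one-line verification based on the translation invariance of Lebesgue measure on the torus $[0,1)\cong \mathbb{R}/\mathbb{Z}$. Since all sums are taken modulo one, for every fixed value $W=w$ the conditional distribution of $W+U$ is just the translate $w+U \bmod 1$, which is again uniform on $[0,1)$ because translation by $w$ is a Haar-measure-preserving bijection of the torus. Thus the conditional law of $W+U$ given $W$ does not depend on the value of $W$, and that is exactly what independence of $W$ and $W+U$ demands.

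Executing this, I would fix Borel sets $A,B\subseteq [0,1)$ and condition on $W$, invoking the independence of $W$ and $U$:
\[
\mathbb{P}(W\in A,\,W+U\in B)\;=\;\int_A \mathbb{P}(w+U\in B)\,dF_W(w).
\]
Translation invariance on the torus gives $\mathbb{P}(w+U\in B)=|B|$ (Lebesgue measure) for every $w$, a quantity independent of $w$, so the integral collapses to $|B|\cdot\mathbb{P}(W\in A)$. A second application of translation invariance, now integrating $w$ out, shows that the marginal of $W+U$ is itself uniform, whence $\mathbb{P}(W+U\in B)=|B|$. Substituting yields the product $\mathbb{P}(W\in A)\mathbb{P}(W+U\in B)$, which is independence.

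There is no substantive obstacle: the lemma is merely the invariance of Haar measure on the compact group $\mathbb{R}/\mathbb{Z}$ dressed up probabilistically. An equivalent Fourier-side argument would be to check that the joint characteristic function $\mathbb{E}\exp(2\pi i(mW+n(W+U)))$ for $m,n\in\mathbb{Z}$ factors as $\mathbb{E}\exp(2\pi i(m+n)W)\cdot\mathbb{E}\exp(2\pi inU)$, using the independence of $W$ and $U$, and then noting that $\mathbb{E}\exp(2\pi inU)=\mathbf{1}_{\{n=0\}}$; the resulting expression on $\mathbb{Z}^2$ matches the product of the marginal characteristic functions of $W$ and of the uniform $W+U$, which is the standard criterion for independence on the torus. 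Either route completes the proof in a few lines, making this lemma a genuinely elementary but conceptually pivotal building block for the block-independence constructions used later.
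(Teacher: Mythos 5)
Your proof is correct. Note that the paper does not prove this lemma at all---it is quoted verbatim from Schatte \cite{sch2} and used as a black box---so there is no in-paper argument to compare against; your translation-invariance computation (equivalently, the Fourier-coefficient factorization) is the standard and essentially only proof, and you correctly identify the one point on which everything hinges, namely that the addition is taken modulo $1$ as stipulated at the start of Section 2, without which the statement would be false.
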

\begin{lem}[Schatte, \cite{sch2}]
Let $X$ be a random variable with the distribution function $F(x)$, where
$\sup_{0\leq x<1} |F(x)-x|\leq\varepsilon$. Further let $U$ be a uniformly distributed random variable being independent of 
$X$. Then there exists a uniformly distributed random variable $V$ such that
\begin{enumerate}
\item[(i)] $|V-X|\leq\varepsilon$
\item[(ii)] $V=f(U,X)$, where $f$ is measurable.
\end{enumerate}
\end{lem}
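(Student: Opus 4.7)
The plan is to construct $V$ explicitly via the distributional (quantile-randomization) transform. Define
\[
V := F(X-) + U\bigl(F(X) - F(X-)\bigr).
\]
This is manifestly a measurable function of the pair $(U, X)$, so condition (ii) holds by construction with $f(u, x) = F(x-) + u(F(x) - F(x-))$.

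The next step is to verify that $V$ is uniformly distributed on $[0,1]$. Conditionally on $X = x$, the random variable $V$ is uniform on the interval $[F(x-), F(x)]$, whose length equals the atom mass of $F$ at $x$ (zero at continuity points). As $x$ ranges over the support of $X$, these intervals tile $[0,1]$ exactly: the continuous part of $F$ contributes through the standard fact that $F(X)$ is uniform on the continuous portion, and the atoms are spread out uniformly using the independent randomness in $U$. This is the classical Rüschendorf distributional transform and yields $V \sim \mathrm{Unif}(0,1)$.

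Finally, bound $|V - X|$. By construction $V$ lies in $[F(X-), F(X)]$. The hypothesis $|F(x) - x| \leq \varepsilon$ on $[0,1)$ directly gives $F(X) \in [X - \varepsilon, X + \varepsilon]$, and passing to the limit $y \uparrow X$ in $|F(y) - y| \leq \varepsilon$ yields $F(X-) \in [X - \varepsilon, X + \varepsilon]$ as well. Since $V$ is a convex combination of these two endpoints, $V \in [X - \varepsilon, X + \varepsilon]$, which is (i).

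The only delicate points are boundary effects: at $X = 0$ one has $F(X-) = 0$ together with $|F(0)| \leq \varepsilon$ from the hypothesis; at $X = 1$ one has $F(X) = 1$ and $F(1-) \geq 1 - \varepsilon$ by taking the left limit. Since the paper works modulo $1$, it is natural to assume $X$ is supported in $[0,1]$, so no issues arise from mass outside the unit interval. I expect no real obstacle here; the lemma reduces to the two ingredients above (the transform produces a uniform, and it lives inside an interval that the hypothesis forces to be within $\varepsilon$ of $X$), and the bulk of the work is simply writing the transform down correctly and checking left limits carefully at atoms.
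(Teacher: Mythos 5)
The paper does not prove this lemma at all --- it is quoted as a known result from Schatte \cite{sch2} --- so there is no internal proof to compare against; judged on its own, your argument is correct and complete. The distributional (randomized quantile) transform $V=F(X-)+U\bigl(F(X)-F(X-)\bigr)$ is indeed uniform, is manifestly a measurable function of $(U,X)$, and since $V$ lies between $F(X-)$ and $F(X)$, both of which are within $\varepsilon$ of $X$ by the hypothesis and its left limits (with the boundary cases at $0$ and $1$ handled as you note), property (i) follows; this is essentially the standard construction behind Schatte's original proof.
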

\begin{theorem}[Schatte, \cite{sch1}]
Let $p_n(x)$ denote the density of $Y_n=\sum_{i=1}^nX_i$, where $X_i$s are i.i.d. random variables. Then the following
assertions are equivalent;
\begin{enumerate}
\item[a)]The density $p_m(x)$ is bounded for some $m$.
\item[b)]$\sup_{0\leq x<1}|p_n(x)-1|\rightarrow 0$ as $n\rightarrow \infty$.
\item[c)]$\sup_{0\leq x<1}|p_n(x)-1|\leq Cw^n$, where $C$ and $w<1$ are real constants.
\end{enumerate}
\end{theorem}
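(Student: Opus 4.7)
The chain $(c) \Rightarrow (b) \Rightarrow (a)$ is immediate: a $Cw^n$ bound with $w<1$ forces $\sup|p_n-1|\to 0$, and once $\sup|p_n-1|<1$ the density $p_n$ is bounded by $2$. The substance of the theorem therefore lies in $(a) \Rightarrow (c)$, which I would attack by Fourier analysis on the torus $[0,1)$ (recalling from the start of the section that the $p_n$ are densities modulo $1$).

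Write $\phi(k) := \mathbb{E}\, e^{2\pi i k X_1}$ for $k \in \mathbb{Z}$. Since the $p_n$ are densities modulo $1$, the $k$th Fourier coefficient of $p_n$ is
\[
\hat{p}_n(k) = \int_0^1 p_n(x)\, e^{-2\pi i k x} \, dx = \mathbb{E}\, e^{-2\pi i k Y_n} = \phi(-k)^n,
\]
with $\hat{p}_n(0)=1$. The Plancherel identity $\int_0^1 p_m(x)^2 \, dx = \sum_{k \in \mathbb{Z}} |\phi(k)|^{2m}$ combined with (a) gives $\sum_k |\phi(k)|^{2m} < \infty$, so in particular $|\phi(k)| \to 0$ as $|k| \to \infty$.

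The step where I expect the most care to be needed is to upgrade this to a \emph{uniform strict} bound $w := \sup_{k \neq 0} |\phi(k)| < 1$. Since $|\phi(k)| \to 0$, only finitely many nonzero $k$ require attention, so it suffices to rule out $|\phi(k_0)|=1$ for any single $k_0 \neq 0$. Such an equality would force $e^{2\pi i k_0 X_1}$ to be almost surely constant, and hence $X_1$, together with $Y_m$, to live almost surely on a translate of the lattice $k_0^{-1}\mathbb{Z}$; but a distribution supported on a countable set cannot admit a bounded Lebesgue density, contradicting~(a). Hence $w<1$.

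With $w$ in hand, (c) is a one-line estimate. For $n \ge 2m$ and $k \neq 0$ we have $|\phi(k)|^n \le w^{n-2m}|\phi(k)|^{2m}$, so the series $\sum_{k\neq 0}\phi(-k)^n e^{2\pi i k x}$ converges absolutely and uniformly and, by Fourier uniqueness, agrees with the continuous representative of $p_n - 1$; therefore
\[
\sup_{0\le x <1} |p_n(x) - 1| \le \sum_{k\neq 0}|\phi(k)|^n \le w^{n-2m}\sum_{k \in \mathbb{Z}}|\phi(k)|^{2m} = w^{n-2m}\,\|p_m\|_2^2,
\]
which is of the required form $Cw^n$. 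The finitely many intermediate cases $m \le n < 2m$ are absorbed into $C$ via the trivial convolution bound $p_n \le \|p_m\|_\infty$ coming from $p_n = p_m * \mu_{n-m}$, completing the proof.
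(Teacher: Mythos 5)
Your proof is correct, and it is the standard Fourier-analytic argument: the paper itself states this result without proof, quoting it from Schatte \cite{sch1}, whose original argument likewise runs through the Fourier coefficients $\hat p_n(k)=\phi(-k)^n$ of the mod-$1$ density, the Plancherel bound $\sum_k|\phi(k)|^{2m}<\infty$ from (a), and the strict bound $\sup_{k\neq 0}|\phi(k)|<1$ obtained by excluding lattice-supported $X_1$. All the delicate points are handled (the sup over $k\neq 0$ is attained because $|\phi(k)|\to 0$; the range $m\le n<2m$ is absorbed via $\|p_n\|_\infty\le\|p_m\|_\infty$), so nothing further is needed.
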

Last four results and simple induction yield the following:
\begin{lem}\label{lem:1}
Fix $l\in {\mathbb N}$, $x\ne 0$ and define a sequence of sets by
\begin{align*}
I_1 &:= \{1,2,\dots, \beta\}\\
I_2 &:= \{p_1, p_1 + 1, \dots, p_1 + \beta_1\}\ \text{ where } \ p_1 \geq \beta + l + 2\\
\vdots & \\
I_n &:= \left\{ p_{n - 1}, p_{n - 1} + 1, \dots, p_{n - 1} +
\beta_{n - 1}\right\}\
\text{ where }\ p_{n - 1} \geq p_{n - 2} + \beta_{n - 2} + l + 2\\
\vdots &
\end{align*}
Then there exists a sequence $\delta_1^x, \delta_2^x, \dots$ of random variables satisfying the following properties:

\begin{itemize}
\item[\rm (i)] $|\delta_n^x| \leq C_x e^{-\lambda_x l}$ for all $n \in \mathbb N$, where
$\lambda_x$ and $C_x$ are some positive quantities that depend on $x$ only.

\item[\rm (ii)] The random variables
\[
\sum_{i \in I_1} f(S_i x), \ \sum_{i \in I_2} f(S_i x - \delta_1^x),
\dots, \ \sum_{i \in I_n} f(S_i x - \delta_{n - 1}^x), \dots
\]
are independent.
\end{itemize}
\end{lem}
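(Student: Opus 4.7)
The idea is to build the $\delta_n^x$ block by block from the ``gap sums'' and invoke the three Schatte lemmas in turn. For $n\ge 2$, set $G_{n-1}:=\sum_{j=p_{n-2}+\beta_{n-2}+1}^{p_{n-1}-1}X_j$, a sum of at least $l+1$ i.i.d.\ copies of $X_1$ whose index range is disjoint from every block $I_m$. Since $X_1$ has bounded density and $x\ne 0$, so does $X_1 x$, so Schatte's Theorem delivers
\[
\sup_{0\le y<1}|p_n(y)-1|\;\le\;C_x w_x^{\,n},\qquad w_x<1,
\]
where $p_n$ is the density of $(X_1+\cdots+X_n)x\bmod 1$. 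In particular the distribution function of $G_{n-1}x\bmod 1$ is within $C_x w_x^{\,l+1}$ of uniform. Enlarging the probability space if necessary, I introduce i.i.d.\ uniform $(0,1)$ variables $U_1,U_2,\ldots$ independent of $(X_j)_{j\ge 1}$. Schatte's Lemma~3 then produces uniform $V_{n-1}=\phi_n(U_{n-1},G_{n-1}x)$ with $|V_{n-1}-G_{n-1}x\bmod 1|\le C_x w_x^{\,l+1}$; setting $\delta_{n-1}^x:=G_{n-1}x-V_{n-1}$ and $\lambda_x:=-\log w_x$ gives~(i).

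For (ii) the key identity---valid modulo $1$ and hence, by the $1$-periodicity of $f$, usable as an equality inside $f$---is
\[
S_{p_{n-1}+k}\,x-\delta_{n-1}^x\;=\;S_{p_{n-2}+\beta_{n-2}}\,x\;+\;V_{n-1}\;+\;\sum_{j=0}^{k}X_{p_{n-1}+j}\,x.
\]
Thus $\sum_{i\in I_n}f(S_i x-\delta_{n-1}^x)$ is a measurable function of the tuple $\bigl(W+V_{n-1}+W^{(n)}_k\bigr)_{k=0,\ldots,\beta_{n-1}}$ with $W:=S_{p_{n-2}+\beta_{n-2}}x$ and $W^{(n)}_k:=\sum_{j=0}^{k}X_{p_{n-1}+j}x$. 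I proceed by induction on $n$: assume that the first $n-1$ adjusted block sums are independent and measurable with respect to $\mathcal G_{n-1}:=\sigma(X_1,\ldots,X_{p_{n-2}+\beta_{n-2}},U_1,\ldots,U_{n-2})$, so $W$ is $\mathcal G_{n-1}$-measurable. Because $V_{n-1}$ is a function only of $U_{n-1}$ and of the gap variables $X_{p_{n-2}+\beta_{n-2}+1},\ldots,X_{p_{n-1}-1}$ (both disjoint from $\mathcal G_{n-1}$ and from block $n$), the triple consisting of the $\mathcal G_{n-1}$-data, $V_{n-1}$, and the block-$n$ data $(X_{p_{n-1}},\ldots,X_{p_{n-1}+\beta_{n-1}})$ is jointly independent. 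Schatte's Lemma~1, with these three playing the roles of $X$, $U$, and $(W_1,\ldots,W_s)$, then shows $\mathcal G_{n-1}$ is independent of the displayed tuple, hence of the $n$-th adjusted block sum, closing the induction.

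The main obstacle will be the bookkeeping: at every step one must verify that the fresh uniform $U_{n-1}$ and the fresh gap block $(X_{p_{n-2}+\beta_{n-2}+1},\ldots,X_{p_{n-1}-1})$ introduce no correlation with any variable feeding into the earlier adjusted sums or into block $n$, so that the independence structure fits exactly the ``$W+U+W_j$'' template of Schatte's Lemma~1. Once this is set up correctly, the $1$-periodicity of $f$ absorbs all modulo-$1$ conventions in the identity above and the inductive step is mechanical.
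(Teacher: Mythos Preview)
Your proposal is correct and follows exactly the route the paper intends: the paper merely says ``Last four results and simple induction yield the following'' without writing out any details, and what you have supplied is precisely that induction---build $\delta_{n-1}^x$ from the gap sum via Schatte's Theorem and Lemma~3, then use Lemma~1 with the triple (past data, fresh uniform $V_{n-1}$, new block increments) to propagate independence. Your bookkeeping of which variables feed into $\mathcal G_{n-1}$, into $V_{n-1}$, and into block $n$ is the only real content, and you have it right.
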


In an identical fashion to Lemma \ref{lem:1} we can prove the following result.
\begin{lem}\label{lem:1:1}
Let $p,q,c,s$ be 4 natural numbers listed in an increasing order. Then there exists a random variable $T$ with 
$T=O(e^{-\lambda(q-p)})$ for some constant $\lambda$ such that
\begin{enumerate}
\item[(i)] $(S_q-T, S_r-T, S_s-T)$ is a random vector with all three components having uniform distribution.
\item[(ii)] Random vector in (i) is independent of $S_p$.
\end{enumerate}
\end{lem}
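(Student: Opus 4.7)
The plan is to mimic the construction used in Lemma \ref{lem:1}, producing a single small correction $T$ that simultaneously decouples all three marginals from $S_p$. Write $Y_1=X_{p+1}+\dots+X_q$, $Y_2=X_{q+1}+\dots+X_r$, $Y_3=X_{r+1}+\dots+X_s$, so that modulo $1$ one has $S_q=S_p+Y_1$, $S_r=S_p+Y_1+Y_2$, and $S_s=S_p+Y_1+Y_2+Y_3$. The three blocks $Y_1,Y_2,Y_3$ are mutually independent and each is independent of $S_p$.

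First I would apply Schatte's Theorem to $Y_1$: since $X_1$ has a bounded density, the mod-$1$ density $p_{q-p}$ of $Y_1$ satisfies $\sup_{0\le y<1}|p_{q-p}(y)-1|\le Cw^{q-p}$ for constants $C>0$ and $w\in(0,1)$. Enlarging the probability space if necessary, introduce a uniform $(0,1)$ variable $U$ independent of the whole sequence $(X_j)_{j\ge 1}$, and apply Schatte's Lemma~3 to obtain a uniform $V=h(U,Y_1)$ with $|V-Y_1|\le Cw^{q-p}$. Put
\[
T:=Y_1-V,
\]
so $|T|=O(e^{-\lambda(q-p)})$ with $\lambda:=-\log w>0$, as required. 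A direct computation modulo $1$ gives
\[
(S_q-T,\,S_r-T,\,S_s-T)=(S_p+V,\,S_p+V+Y_2,\,S_p+V+Y_2+Y_3).
\]

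Because $V=h(U,Y_1)$ is a function of $U$ and $Y_1$, and both $U$ and $Y_1$ are independent of the vector $(S_p,Y_2,Y_3)$, the variable $V$ is uniform and independent of $(S_p,Y_2,Y_3)$. Claim (ii) then follows by applying Schatte's Lemma~1 with $X:=S_p$, the uniform variable there taken equal to $V$, and auxiliary vector $(W_1,W_2,W_3):=(0,Y_2,Y_2+Y_3)$: all three are mutually independent, $V$ is uniform, and the conclusion says $S_p$ is independent of $(S_p+V,\,S_p+V+Y_2,\,S_p+V+Y_2+Y_3)$. For the marginal uniformity in (i), $S_p+V\pmod 1$ is uniform since $V$ is uniform and independent of $S_p$, and adding to a uniform variable the independent quantities $Y_2$ or $Y_2+Y_3$ preserves uniformity. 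The only delicate point is the independence bookkeeping: one must choose $U$ independent of the \emph{entire} sequence $(X_j)$ so that $V$ inherits independence both from the earlier block $S_p$ and from the later blocks $Y_2,Y_3$. Once this is arranged the argument is a direct rerun of Lemma \ref{lem:1}.
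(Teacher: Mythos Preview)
Your proof is correct and is exactly the argument the paper has in mind: the paper gives no separate proof of this lemma, only the remark that it is proved ``in an identical fashion to Lemma~\ref{lem:1}'', i.e.\ by combining Schatte's approximation of $Y_1=S_q-S_p$ by a uniform variable $V$ (Lemma~3 and Theorem~2.1) with Schatte's independence Lemma~1 applied to $X=S_p$, $U=V$, $W=S_p$ and $(W_1,W_2,W_3)=(0,Y_2,Y_2+Y_3)$. Your write-up makes the independence bookkeeping explicit; the only cosmetic point is that the marginal uniformity in (i) is cleanest argued by noting that $V$ is uniform and independent of each of $S_p$, $S_p+Y_2$, $S_p+Y_2+Y_3$, rather than first forming $S_p+V$ and then adding $Y_2$.
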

\begin{corollary}\label{cor:1}
Fix $x\in \mathbb R$. Define a sequence of random variables on $(\Omega,\mathcal F, \mathbb P)$ by
$$
Y_k^x=f(S_kx)-\mathbb E F(S_kx).
$$
Then for any sequence $(a_k)_{k\in \mathbb N}$ we have
$$
\mathbb E\left( \sum_{k=1}^na_kY_k^x \right)^4\leq C_x\left( \sum_{k=1}^na_k^2 \right)^2
$$
for some constant $C_x$ depending on $x$ only.
\end{corollary}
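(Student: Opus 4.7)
The plan is to expand the fourth moment as a fourfold sum
\[
\mathbb E\Bigl(\sum_{k=1}^n a_k Y_k^x\Bigr)^4 = \sum_{1\le i_1,i_2,i_3,i_4\le n} a_{i_1}a_{i_2}a_{i_3}a_{i_4}\,\mathbb E[Y_{i_1}^x Y_{i_2}^x Y_{i_3}^x Y_{i_4}^x]
\]
and then to control the four-point correlations by block-decoupling at the largest gap. I treat the principal case of four distinct ordered indices $i_1<i_2<i_3<i_4$; tuples with repetitions involve lower-order moments and are handled in the same way (one re-expands $|abcd|$ using AM--GM to reduce everything to sums of $a_k^4\le(\sum a_k^2)^2$).

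For an ordered quadruple let $g=\max(i_2-i_1,\,i_3-i_2,\,i_4-i_3)$, attained at some position $m\in\{1,2,3\}$. A four-point variant of Lemma \ref{lem:1:1}, or equivalently Lemma \ref{lem:1} applied to two blocks separated by $l=g$, produces random variables $\tilde Y_{i_{m+1}}^x,\dots,\tilde Y_{i_4}^x$ that are jointly independent of $(Y_{i_1}^x,\dots,Y_{i_m}^x)$ and satisfy $|\tilde Y_{i_j}^x - Y_{i_j}^x|\le L_x\,|\delta|^\alpha$ for a correcting term $\delta=O(e^{-\lambda_x g})$; here the Lip$(\alpha)$ hypothesis on $f$ is what converts exponential closeness of the shifted arguments into exponential closeness of the $Y$'s. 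Replacing the tail by its independent approximation and using boundedness of $f$ gives, with $c_x=\lambda_x\alpha$,
\[
\mathbb E[Y_{i_1}^x Y_{i_2}^x Y_{i_3}^x Y_{i_4}^x] = \mathbb E[Y_{i_1}^x\cdots Y_{i_m}^x]\,\mathbb E[\tilde Y_{i_{m+1}}^x\cdots \tilde Y_{i_4}^x] + O(e^{-c_x g}).
\]
When $m=1$ the first factor is $\mathbb E Y_{i_1}^x=0$; when $m=3$ the second factor is $\mathbb E\tilde Y_{i_4}^x=O(e^{-c_x g})$; in both cases the whole expression is $O(e^{-c_x g})$. When $m=2$ one is left with a product of pair correlations $\mathbb E[Y_{i_1}^x Y_{i_2}^x]\cdot\mathbb E[\tilde Y_{i_3}^x\tilde Y_{i_4}^x]$, and the two-point version of Lemma \ref{lem:1:1} already delivers $|\mathbb E[Y_i^x Y_j^x]|\le C_x e^{-c_x(j-i)}$. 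Combining cases,
\[
\bigl|\mathbb E[Y_{i_1}^x Y_{i_2}^x Y_{i_3}^x Y_{i_4}^x]\bigr|\le C_x\bigl(e^{-c_x g}+e^{-c_x(i_2-i_1)}e^{-c_x(i_4-i_3)}\bigr).
\]

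It remains to verify that each of the two pieces sums to at most $C_x(\sum_k a_k^2)^2$. The second factors neatly: by Cauchy--Schwarz,
\[
\sum_{i<j}|a_ia_j|e^{-c_x(j-i)} = \sum_{m\ge 1}e^{-c_x m}\sum_i|a_i a_{i+m}| \le \Bigl(\sum_{m\ge 1}e^{-c_x m}\Bigr)\sum_k a_k^2,
\]
so the four-index sum is bounded by $(\text{const})\,(\sum_k a_k^2)^2$. For the first piece, after fixing $i_1$, the remaining three indices lie in $[i_1,i_1+3g]$; applying AM--GM to $|a_{i_1}a_{i_2}a_{i_3}a_{i_4}|\le\tfrac14(a_{i_1}^4+a_{i_2}^4+a_{i_3}^4+a_{i_4}^4)$ and summing yields $Cg^3\sum_k a_k^4$, and $\sum_{g\ge 1}g^3e^{-c_x g}<\infty$ together with $\sum_k a_k^4\le(\sum_k a_k^2)^2$ closes this case as well. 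The absolute convergence of the series defining $A_x$ in the main theorem is a by-product of the two-point covariance bound used above.

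The main obstacle is the clean four-point application of the Schatte independence machinery, since Lemma \ref{lem:1:1} is stated for only three random variables anchored by a single $S_p$: one has to verify that the independent copies constructed for $i_{m+1},\dots,i_4$ are simultaneously close (in Lip$(\alpha)$ sense) to the originals, with a decay rate depending only on $x$ and on the maximal inner gap $g$. Once that coupling is in place, the rest of the argument is the routine combinatorial bookkeeping described above.
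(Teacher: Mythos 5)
Your proposal is correct and rests on the same two pillars as the paper's own proof: expand the fourth moment, control the four-point correlations $\mathbb E\,Y_{i_1}^xY_{i_2}^xY_{i_3}^xY_{i_4}^x$ by the Schatte decoupling machinery (Lemmas \ref{lem:1} and \ref{lem:1:1}), then sum. Only the bookkeeping differs. The paper records the correlation bound in the single uniform form
\[
\bigl|\mathbb E\,Y_p^xY_q^xY_r^xY_s^x\bigr|\le C_x e^{-D_x\left((q-p)+(s-r)\right)},\qquad p\le q\le r\le s,
\]
valid with ties allowed, after which the substitution $\omega=q-p$, $\zeta=s-r$ and Cauchy--Schwarz in $p$ and in $r$ finish in one stroke. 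You instead decouple at the largest of the three gaps, which forces the case analysis $m\in\{1,2,3\}$, the two-piece bound $e^{-c_xg}+e^{-c_x(i_2-i_1)}e^{-c_x(i_4-i_3)}$, and the extra AM--GM/$g^3$-counting step for the $e^{-c_xg}$ piece. This is valid (the two correlation bounds are essentially equivalent, since the maximal gap is at least half the sum of the outer gaps), just more laborious, and it leaves the repeated-index tuples to a separate discussion that the paper's non-strict inequalities absorb automatically.

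One caveat on that separate discussion: your parenthetical claim that repetitions are disposed of by AM--GM alone, reducing everything to $\sum_k a_k^4$, does not work for tuples with exactly three distinct indices. For instance the terms $\mathbb E\bigl[(Y_i^x)^2Y_j^xY_k^x\bigr]$ carry the weight $a_i^2|a_ja_k|$, and $\sum_{i<j<k}a_i^2|a_ja_k|$ can be of order $n\bigl(\sum_k a_k^2\bigr)^2$ (take all $a_k=1$), so these terms genuinely need the exponential decay in the relevant outer gap (e.g.\ $\bigl|\mathbb E\bigl[(Y_i^x)^2Y_j^xY_k^x\bigr]\bigr|\le C_xe^{-c_x(k-j)}$), which your decoupling does supply. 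So ``handled in the same way'' is the correct statement, but the AM--GM shortcut as written is not; with that repaired, the argument matches the paper's in substance.
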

\begin{proof}
As a direct consequence of Lemma \ref{lem:1:1} we can find a random variable $T^x$ such that
$$
(S_qx-T^x, S_rx-T^x, S_sx-T^x)\ci S_px.
$$
For simplicity define
$$
Z_{q,r,s}^x:=(f(S_qx-T^x)-\mathbb E f(S_qx))( f(S_rx-T^x)-\mathbb E f(S_rx))( f(S_sx-T^x)-\mathbb E f(S_sx))
$$
Then it is clear that $Y_p^x$ is independent of $Z_{q,r,s}^x$ and that they are both zero mean random variables. Using Lemma 
\ref{lem:1:1} one can see that
$$
|\mathbb EY_p^xY_q^xY_r^xY_s^x|\leq C_xe^{-D_x(q+s-p-r)}.
$$
But then it follows that
$$
\mathbb E\left( \sum_{k=1}^n a_kY_k^x \right)\leq C_x\sum_{1\leq p\leq q\leq r\leq s\leq 
n}|a_p||a_q||a_r||a_s|e^{-D_x(|q-p|+|s-r|)}
$$
Now let $q-p=\omega$, $s-r=\zeta$. We have
\begin{align*}
&\sum_{1\leq p\leq q\leq r\leq s\leq n}|a_p||a_q||a_r||a_s|e^{-(|q-p|+|s-r|)}=\\
&\sum_{1\leq p, p+\omega, r, r+\zeta\leq n}|a_p||a_{p+\omega}||a_r||a_{r+\zeta}|e^{-\omega+\zeta}\leq\\
&e^{-\omega+\zeta}\sum_{1\leq p, p+\omega\leq n}|a_p||a_{p+\omega}|\sum_{1\leq r, r+\zeta\leq n}|a_r||a_{r+\zeta}|\leq\\
&e^{-\omega+\zeta}\left(\sum_{1\leq p\leq n}a_p^2\right)^{1/2}\left(\sum_{1\leq p+\omega\leq n}a_{p+\omega}^2\right)^{1/2}
\left(\sum_{1\leq r\leq n}a_r^2\right)^{1/2}\left(\sum_{1\leq r+\zeta\leq n}a_{r+\zeta}^2\right)^{1/2}\leq\\
&e^{-\omega+\zeta}\left( \sum_{1\leq k\leq n} a_k^2 \right)^{1/2},
\end{align*}
and the proof is now complete.
\end{proof}

\medskip
Put  $\widetilde m_k = \sum\limits^k_{j = 1} \lfloor
j^{1/2}\rfloor$, $\widehat m_k = \sum\limits^k_{j = 1} \lfloor
j^{1/4}\rfloor$ and let $m_k = \widetilde m_k + \widehat m_k$.
Using Lemma 4 we can construct sequences $(\Delta_k^x)_{k \in \mathbb N}$,
$(\Pi_k^x)_{k \in \mathbb N}$ of random variables such that setting
\begin{align*}
T_k &:= \sum^{m_{k - 1} + \lfloor \sqrt{k}\rfloor}_{j = m_{k - 1} +
1} \left(f(S_j x - \Delta_{k - 1}^x) -
\mathbb E f(S_j x - \Delta_{k - 1})\right)\\
T_k^* &:= \sum^{m_k}_{j = m_{k - 1} + \lfloor\sqrt{k}\rfloor + 1}
\left(f(S_j x - \Pi_{k - 1}^x) - \mathbb E f(S_j x - \Pi_{k -
1}^x\right)
\end{align*}
we have

\medskip
{\rm (i)} $\Delta_0^x = 0$; $|\Delta_k^x| \leq C_x e^{-\lambda_x
\sqrt[4]{k}}$;  $(T_k)_{k \in \mathbb N}$ is a
sequence of independent random variables,

{\rm (ii)} $\Pi_0^x = 0$; $|\Pi_k^x| \leq C_x e^{-\lambda_x
\sqrt{k}}$;  $(T_k^*)_{k \in \mathbb N}$ is a sequence of
independent random variables.

\medskip\noindent
Routine stationary-type arguments yield the following:
\begin{lem} \label{lem:2}
\begin{equation*}
\sum^n_{k = 1} \mathbb V\text{\rm ar} (T_k) \sim A_x \widetilde m_n,
\qquad  \sum^n_{k = 1} \mathbb V\text{\rm ar} (T_k^*) \sim A_x
\widehat m_n.
\end{equation*}
where $A_x$ is defined by (\ref{axdef}).
\end{lem}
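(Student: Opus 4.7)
The plan is to expand each block variance as a double sum of covariances, strip off the tiny shifts $\Delta_{k-1}^x$ and $\Pi_{k-1}^x$ via the Lipschitz hypothesis, and identify the stationary limit of the resulting correlation kernel using Schatte's exponential convergence of the density of $S_nx \bmod 1$ to the uniform (Theorem~1 in the excerpt).

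First, fix $x\ne 0$ and set $B_k=\{m_{k-1}+1,\dots,m_{k-1}+\lfloor\sqrt k\rfloor\}$, so that
\[
\mathrm{Var}(T_k)=\sum_{i,j\in B_k}\mathrm{Cov}\!\left(f(S_ix-\Delta_{k-1}^x),\,f(S_jx-\Delta_{k-1}^x)\right).
\]
Since $|\Delta_{k-1}^x|\le C_xe^{-\lambda_x k^{1/4}}$ and $f\in\mathrm{Lip}(\alpha)$, each summand differs from $\mathrm{Cov}(f(S_ix),f(S_jx))$ by $O(e^{-\alpha\lambda_x k^{1/4}})$. Summed over the $\le k$ pairs in $B_k\times B_k$ and then over $k\le n$, the total shift error is $O(1)$, hence negligible against $\widetilde m_n\asymp n^{3/2}$.

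Second, I would treat the unshifted covariances by decomposing $S_jx=S_ix+(S_j-S_i)x$ with $(S_j-S_i)x\stackrel{d}{=}S_{j-i}x$ independent of $S_ix$. Theorem~1 applied to the rescaled i.i.d.\ sequence $(X_\ell x)_{\ell\ge 1}$, which is bounded with bounded density whenever $x\ne 0$, yields $\sup_{y\in[0,1)}|p_i^x(y)-1|\le C_xw_x^i$ for some $w_x<1$. Hence $\mathbb Ef(S_ix)=O(w_x^i)$ and $\mathbb Ef(S_ix)f(S_jx)=\mathbb Ef(U)f(U+S_{j-i}x)+O(w_x^i)$, where $U\sim\mathrm{Unif}(0,1)$ is independent of $(X_\ell)$. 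The same bound applied to $S_dx$ gives $|\mathbb Ef(U)f(U+S_dx)|\le Cw_x^d$, which simultaneously establishes absolute convergence of the series in (\ref{axdef}) and exponential decay of the kernel $d\mapsto\mathbb Ef(U)f(U+S_dx)$.

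Third, grouping pairs in $B_k$ by the gap $d=j-i$ and exploiting the exponential decay in both $d$ and $\min(i,j)$ yields
\[
\mathrm{Var}(T_k)=\lfloor\sqrt k\rfloor\Bigl(\|f\|^2+2\sum_{d=1}^{\infty}\mathbb Ef(U)f(U+S_dx)\Bigr)+O(1)=A_x\lfloor\sqrt k\rfloor+O(1),
\]
so $\sum_{k\le n}\mathrm{Var}(T_k)=A_x\widetilde m_n+O(n)\sim A_x\widetilde m_n$. The analogous computation with $\lfloor\sqrt k\rfloor$ replaced by $\lfloor k^{1/4}\rfloor$ and $\Delta_{k-1}^x$ by $\Pi_{k-1}^x$ (which is even smaller, being $O(e^{-\lambda_x\sqrt k})$) delivers the second asymptotic, since $\widehat m_n\asymp n^{5/4}\gg n$. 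The one delicate bookkeeping point is ensuring that the rate $w_x$ produced by Theorem~1 can be chosen uniformly so that all error constants collapse into a single $C_x$; this is automatic because $w_x$ depends on $x$ only through the common density of $X_1x$. The argument is indeed of the ``routine stationary-type'' that the text announces, the only genuine input being the exponential mixing of the walk $(S_nx\bmod 1)$.
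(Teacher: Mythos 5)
Your proposal is correct, and it supplies precisely the ``routine stationary-type arguments'' that the paper only alludes to without writing out: strip the shifts $\Delta_{k-1}^x,\Pi_{k-1}^x$ by the Lip($\alpha$) bound, use Schatte's exponential uniformization of the density of $S_n x \bmod 1$ to reduce each block covariance to the stationary kernel $d\mapsto \mathbb{E}f(U)f(U+S_dx)$ with exponentially small errors, and sum to get $\mathrm{Var}(T_k)=A_x\lfloor\sqrt{k}\rfloor+O(1)$ (and likewise for $T_k^*$), which also yields the absolute convergence of the series in (\ref{axdef}) as the paper promises. Since the accumulated $O(n)$ error is $o(\widetilde m_n)$ and $o(\widehat m_n)$, this is exactly the intended argument and no gap remains.
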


The following lemma is a special case of Strassen's strong approximation theorem.

\begin{lem}\label{Strassen}
Let $Y_1, Y_2, \dots$ be independent r.v.'s with finite fourth moments, let
$a_n = \sum_{i = 1}^n \mathbb E Y_i^2$ and assume
$$
\sum_{n = 1}^\infty \mathbb E Y_n^4 / a_n^{2\vartheta} < \infty
$$
with $0 < \vartheta < 1$.
Then the sequence $Y_1, Y_2, \dots$ can be redefined on a new probability space together with a Wiener process $\zeta (t)$ 
such that
\[
Y_1 + \dots + Y_n = \zeta(a_n) + o \left({a_n}^{(1 + \vartheta)/4} \log a_n\right) \ \text{ a.s.}
\]
\end{lem}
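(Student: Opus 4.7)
The plan is to realize the partial sums $S_n := Y_1 + \cdots + Y_n$ as a Brownian motion evaluated at a random time via Skorokhod embedding, and then replace that random time by the deterministic $a_n$ using the modulus of continuity of Brownian paths. Concretely, I would apply the Skorokhod representation iteratively (on an enlarged probability space supporting a standard Brownian motion $\zeta$), producing independent nonnegative random variables $T_1, T_2, \dots$ with $\tau_n := T_1 + \cdots + T_n$ such that $Y_n = \zeta(\tau_n) - \zeta(\tau_{n - 1})$ jointly in $n$, $\mathbb E T_n = \mathbb E Y_n^2$, and $\mathbb E T_n^2 \leq C\, \mathbb E Y_n^4$ (the standard $L^2$ bound for Skorokhod stopping times, available here because the $Y_n$ possess finite fourth moments). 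After this redefinition, $S_n = \zeta(\tau_n)$.

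The next step is to show $\tau_n - a_n = o(a_n^\vartheta)$ almost surely. Writing $D_n := T_n - \mathbb E Y_n^2$, the $D_n$ are independent mean-zero random variables with $\mathbb E D_n^2 \leq \mathbb E T_n^2 \leq C\, \mathbb E Y_n^4$, so by hypothesis
$$\sum_{n = 1}^\infty \frac{\mathbb E D_n^2}{a_n^{2\vartheta}} \leq C \sum_{n = 1}^\infty \frac{\mathbb E Y_n^4}{a_n^{2\vartheta}} < \infty.$$
Kolmogorov's convergence criterion then guarantees that $\sum_n D_n/a_n^\vartheta$ converges almost surely, and since $a_n^\vartheta \uparrow \infty$, Kronecker's lemma yields $(\tau_n - a_n)/a_n^\vartheta \to 0$ a.s.

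Finally, I would invoke L\'evy's local modulus of continuity for Brownian motion: on $[0, T]$ the modulus at scale $h$ is $O\bigl(\sqrt{h \log(T/h)}\bigr)$. Applied with $T = 2 a_n$ and the eventually valid $h = a_n^\vartheta$, this gives
$$|\zeta(\tau_n) - \zeta(a_n)| = O\bigl(a_n^{\vartheta/2} \sqrt{\log a_n}\bigr) \qquad \text{a.s.},$$
and since $\vartheta/2 < (1 + \vartheta)/4$ whenever $\vartheta < 1$, this is $o\bigl(a_n^{(1 + \vartheta)/4} \log a_n\bigr)$, as required. The main obstacle is the middle step: translating the summable fourth-moment condition into an almost-sure bound on the time-change error $\tau_n - a_n$. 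Once this is in hand, the Brownian modulus absorbs the shift with plenty of slack, which is why the stated bound carries a factor $\log a_n$ rather than just $\sqrt{\log a_n}$.
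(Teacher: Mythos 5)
The paper offers no proof of this lemma: it is quoted as a special case of Strassen's strong approximation theorem and used as a black box, with a citation to Strassen (1967). Your sketch is essentially a reconstruction of the classical argument behind that citation: sequential Skorokhod embedding with $\mathbb{E}T_n=\mathbb{E}Y_n^2$ and $\mathbb{E}T_n^2\le C\,\mathbb{E}Y_n^4$, then Kolmogorov's convergence criterion plus Kronecker's lemma to get $\tau_n-a_n=o(a_n^{\vartheta})$ a.s., then a Brownian increment estimate to replace $\zeta(\tau_n)$ by $\zeta(a_n)$. The outline is sound, and in fact yields a sharper error $O\bigl(a_n^{\vartheta/2}\sqrt{\log a_n}\bigr)$ than the stated $o\bigl(a_n^{(1+\vartheta)/4}\log a_n\bigr)$, since $\vartheta/2<(1+\vartheta)/4$ for $\vartheta<1$. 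Three caveats are needed to make it airtight: (a) the Skorokhod embedding requires $\mathbb{E}Y_n=0$; the lemma omits this hypothesis, but it is implicit (and satisfied by the centered blocks $T_k$, $T_k^*$ to which the paper applies the lemma) --- without centering the conclusion is false; (b) Kronecker's lemma needs $a_n\uparrow\infty$, again implicit in the statement; (c) the final step cannot literally invoke L\'evy's modulus of continuity, which concerns a fixed time horizon and $h\to 0$, whereas here both the horizon $T\asymp a_n$ and the lag $h=a_n^{\vartheta}$ tend to infinity. What you need is the Cs\"org\H{o}--R\'ev\'esz large-increment theorem, $\sup_{0\le t\le T-h}\sup_{0\le s\le h}|\zeta(t+s)-\zeta(t)|=O\bigl(\sqrt{h(\log(T/h)+\log\log T)}\bigr)$ a.s., or an equivalent blocking/Borel--Cantelli argument along dyadic values of $a_n$; a term-by-term Borel--Cantelli over $n$ can fail when $a_n$ grows slowly. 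With these points repaired, your plan is a complete proof and follows the same route as Strassen's original one, so nothing is lost relative to the paper, which simply cites the result.
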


\bigskip\noindent
We are now ready to prove our result.

\medskip\noindent
As before, let
\[
T_k = \sum\limits_{j = m_{k - 1} + 1}^{m_{k - 1} + \lfloor\sqrt{k}\rfloor} f\left(S_j x - {\Delta_{k - 1}}^x\right)
\ \text{ and } \
{T_k}^* = \sum\limits_{j = m_{k - 1} + \lfloor \sqrt{k}\rfloor + 1}^{m_k} f \left(S_j x - {\Pi_{k - 1}}^x\right).
\]
We will apply Lemma \ref{Strassen} for both $(T_k)_{k \in \mathbb N}$ and $({T_k}^*)_{k \in \mathbb N}$.
Clearly, $(T_k)_{k \in \mathbb N}$ is a sequence of independent, zero-mean random variables and $|T_k| \leq M \sqrt{k}$.
Corollary 2.1 gives as:
\[
\mathbb E \left(\sum_{k = 1}^N c_k Y_k^x\right)^4 \leq C_x \cdot \left(\sum_{k = 1}^N {c_k}^2\right)^2
\]
where $Y_k^x = f(S_kx) - \mathbb E f(S_kx)$.
But then
\[
\mathbb E {T_k}^4 \leq C_1^x k + C_2^x k^2 e^{-\alpha \lambda_x \sqrt[4]{k - 1}}
\]
for some suitable constants $C_1^x$ and $C_2^x$.
Now observe that
\[
V_n = \sum_{k = 1}^n \mathbb E Y_k^2 \sim A_x m_n \sim A_x \cdot \frac23 n^{3/2}
\]
by Lemma~\ref{lem:2}.
These facts together imply that
$$\sum\limits_{k \in \mathbb N} \frac{\mathbb E {T_k}^4}{m_k^{2(2/3 + \varepsilon)}} < \infty$$
for all $\varepsilon > 0$ and thus by Lemma \ref{Strassen} we get
\[
\sum_{k = 1}^n T_k = \zeta(A_x m_n) + o \left(m_n^{\left(1 + \frac23 + \varepsilon\right)/4} \log m_n\right) \quad 
\text{a.s.}
\]
Define a sequence $(p(n))_{n \in \mathbb N}$ of integers by
\[
m_{p(n)} \leq n < m_{p(n) + 1}.
\]
Clearly,
\[
\sum_{k = 1}^{p(n)} T_k = \zeta \left(A_x m_{p(n)}\right) + o \left({m_{p(n)}}^{\left(1 + \frac23 + \varepsilon\right)/4} 
\log m_{p(n)}\right);
\quad \text{a.s.}
\]
and similarly
\[
\sum_{k = 1}^{p(n)} {T_k}^* = \zeta' \left(A_x {\widehat m_{ p(n)}}\right) + o \left({{\widehat m_{ p(n)}}}^{\left(1 + 
\frac35 + \delta\right)/4} \log m_{p(n)}\right); \quad \text{a.s.}
\]
for some other Brownian motion $\zeta'$.
Now
\begin{align*}
\sum_{k = 1}^n f(S_k x) &= \sum_{k = 1}^{p(n)} T_k + \sum_{k = 1}^{p(n)} {T_k}^* + \sum_{k = 1}^{p(n)} \sum_{j = m_{k - 1} + 
1}^{m_{k - 1} + \lfloor \sqrt{k}\rfloor} \left(f(S_j x) - f(S_j x - {\Delta_{k - 1}}^x)\right)\\
&\quad + \sum_{k = 1}^{p(n)} \sum_{j = m_{k - 1} + \lfloor \sqrt{k}\rfloor + 1}^{m_k} \left(f(S_j x) - f(S_j x - {\Pi_{k - 
1}}^x)\right) + \sum_{k = m_{p(n)} + 1}^n f(S_k x).
\end{align*}
Thus
\begin{align*}
\sum_{k = 1}^n f(S_k x)\! -\! \zeta (A_x n) &= \sum_{k = 1}^{p(n)} T_k - \zeta \left(A_x m_{p(n)}\right) + \zeta\left(A_x 
m_{p(n)}\right) - \zeta (A_x n) \\
&\ + \sum_{k = 1}^{p(n)} {T_k}^* - \zeta'\left(A_x {\widehat m_{ p(n)}}\right) + \zeta'\left(A_x {\widehat m_{ p(n)}}\right) 
\\
&\ + \sum_{k = 1}^{p(n)} \sum_{j = m_{k - 1} + 1}^{m_{k - 1} + \lfloor \sqrt{k}\rfloor} \left(f(S_j x) - f(S_j x - {\Delta_{x 
- 1}}^x)\right) \\
&\ + \sum_{k = 1}^{p(n)} \sum_{j = m_{k - 1} + \lfloor \sqrt{k}\rfloor + 1}^{m_k}\! \left(f(S_j x) \!-\! f(S_j x \! -\! 
{\Pi_{k - 1}}^x)\right) + \! \sum_{k = m_{p(n) + 1}}^n\! f(S_k x).
\end{align*}
We investigate each term separately.

\medskip\noindent
(i) We have seen that
\[
\sum_{k = 1}^{p(n)} T_k - \zeta (A_x m_{p(n)}) = o\left({m_{p(n)}}^{(1 + 2/3 + \varepsilon)/4} \log m_{p(n)}\right).
\]
But $m_{p(n)} \sim n$ and thus
\[
\sum_{k = 1}^{p(n)} T_k - \zeta (A_x m_{p(n)}) = o \left( n^{\frac{5}{12} + \frac{\varepsilon}{4}} \log n\right) = o 
\left(n^{\frac{5}{12} + \varepsilon} \log n\right).
\]

\medskip\noindent
(ii) We have
\begin{align*}
\left|\zeta(A_x m_{p(n)}) - \zeta (A_x n)\right| &= \left| \zeta(A_x n) - \zeta(A_x m_{p(n)})\right| \\
&\overset{d}{=} \left| N(0, A_x (n - m_{p(n)})\right|
\end{align*}
Now
$$n - m_{p(n)} \leq \lfloor(p(n) + 1)^{1/2}\rfloor + \lfloor (p(n) + 1)^{1/4}\rfloor \leq C n^{1/3}$$
for all $n$ large enough.
Thus
$$\sum\limits_{n \in\mathbb N} \mathbb P \left(\left| \zeta (A_x m_{p(n)}) - \zeta (A_x n) \right| \geq n^{7/24}\right) < 
\infty$$
and consequently
\[
\left|\zeta(A_x m_{p(n)}) - \zeta(A_x n)\right| \leq n^{7/24} \quad \text{a.s.}
\]
for all $n$ large enough.
Hence
\[
\frac1{n^{5/12 + \varepsilon} \log n} \left|\zeta(A_x m_{p(n)}) - \zeta (A_xn)\right| \to 0 \qquad \text{a.s.}
\]

\medskip\noindent
(iii) We have also seen that
\[
\sum_{k = 1}^{p(n)} {T_k}^* - \zeta'(A_x {\widehat m_{ p(n)}}) = o\left({{\widehat m_{ p(n)}}}^{(1 + 3/5 + \delta)/4} \log
{\widehat m_{ p(n)}}\right).
\]
But $\hat m_n \sim Cn^{5/4}$ and thus  ${\widehat m_{ p(n)}} \sim Cn^{5/6}$.
Hence for sufficiently small $\delta$ we have
\[
\frac{1}{n^{5/12 + \varepsilon} \log n} \left|\sum_{k = 1}^{P(n)} {T_k}^* - \zeta'\left(A_x {\widehat m_{ p(n)}}\right) 
\right|
\leq \frac{C n^\frac{8 + 5\delta}{24}}{n^{5/12 + \varepsilon} \log n} \to 0 \quad \text{a.s.}
\]
as $n\to\infty$.

\medskip\noindent
(iv) Clearly $\zeta'(A_x {\widehat m_{ p(n)}}) \overset{d}{=} \sqrt{A_x \, {\widehat m_{ p(n)}}} \, N(0,1)$.
Observe that $\widehat{m}_n \sim C n^{5/4}$, $p(n) \sim C n^{2/3}$ and thus ${\widehat m_{ p(n)}} \sim Cn^{5/6}$.
Consequently,
$$ \dfrac{\left|\zeta'(A_x {\widehat m_{ p(n)}})\right|}{n^{5/12 + \varepsilon} \log n} \leq \dfrac{n^{5/12 + 
\varepsilon/2}}{n^{5/12 + \varepsilon} \log n} \to 0 \qquad \text{a.s.} $$
as $n \to \infty$.

\medskip\noindent
(v)
We have
\begin{align*}
&\left|\sum_{k = 1}^{p(n)} \sum_{j = m_{k - 1} + 1}^{m_{k - 1} + \lfloor \sqrt{k}\rfloor} \left(f(S_j x) - f(S_j x - 
{\Delta_{k - 1}}^x)\right)\right| \\
&\leq \sum_{k = 1}^{p(n)} \sum_{j = m_{k - 1} + 1}^{m_{k - 1} + \lfloor \sqrt{k}\rfloor} \left| f(S_j x) - f(S_j x - 
{\Delta_{k - 1}}^x)\right| \\
&\leq \sum_{k = 1}^{p(n)} \sum_{j = m_{k - 1} + 1}^{m_{k - 1} + \lfloor \sqrt{k}\rfloor} C_x^\alpha e^{-\alpha \lambda x 
\lfloor \sqrt[n]{k - 1}\rfloor} =
\sum_{k = 1}^{p(n)} C_x^\alpha \lfloor \sqrt{k}\rfloor e^{-\alpha \lambda_x \lfloor \sqrt[n]{k - 1}\rfloor}\\
&\leq \sum_{k = 1}^\infty C_x^\alpha \lfloor \sqrt{k}\rfloor e^{-\alpha \lambda_x \lfloor \sqrt[\eta]{k - 1}\rfloor} < 
\infty.
\end{align*}
Thus
$$\dfrac{1}{n^{5/12 + \varepsilon} \log n} \sum\limits_{k = 1}^{p(n)} \sum\limits_{j = m_{k - 1} + 1}^{m_{k - 1} + \lfloor 
\sqrt{k}\rfloor}
\left(f(S_j x) - f(S_j x - {\Delta_{k - 1}}^x)\right) \to 0 \qquad \text{a.s.} $$
as $n \to \infty$.

\medskip\noindent
(vi) 
Arguing exactly as in (v), we get
\[
\dfrac{1}{n^{5/12 + \varepsilon} \log n} \sum\limits_{k = 1}^{P(n)} \sum\limits_{j = m_{k - 1} + \lfloor \sqrt{k}\rfloor + 
1}^{m_k}
\left(f(S_j x) - f(S_j x - {\Pi_{k - 1}}^x)\right) \to 0\ \qquad \text{a.s.}
\]

\medskip\noindent
(vii)
We have
\begin{align*}
\left|\sum_{k = m_{p(n)} + 1}^n \!\! f(S_k x)\right| &\leq \sum_{k = m_{p(n)} + 1}^n \left|f(S_k x)\right| \leq M(n - 
m_{p(n)}) \\
&\leq M\!\left(\!\left\lfloor (p(n) + 1)^{1/2} \right\rfloor
\! +\!  \left\lfloor (p(n) + 1)^{1/4}\right\rfloor\! \right)\! \leq 2M \!\left\lfloor (p(n)\! +\! 1)^{1/2}\right\rfloor \! 
\sim 2
M n^{1/3}.
\end{align*}
Thus
$$\dfrac1{n^{5/12 + \varepsilon} \log n} \sum\limits_{k = m_{p(n)} + 1}^n f(S_k x) \to 0 \qquad \text{a.s.} $$
as $n \to \infty$. Summarizing the above estimates, we obtain our result.

\end{document}